\newtheorem{lemma}{Lemma}%[section]
\newtheorem{theorem}[lemma]{Theorem}
\newtheorem{rem}[lemma]{Remark}
\newcommand{\re}{\begin{rem}\rm}
  \newcommand{\mar}{\end{rem}}
\newtheorem{exam}[lemma]{Example}
\newcommand{\ex}{\begin{exam}\rm}
\newcommand{\amp}{\end{exam}}
\newcommand{\F}{\mathcal{F}}
\newcommand{\N}{\mathcal{N}}
\newcommand{\nz}{\mathbb{N}}
\newcommand{\ez}{\mathbb{E}}
\newcommand{\al}{\alpha}
\newcommand{\de}{\delta}
\newcommand{\si}{\sigma}
\newcommand{\ga}{\gamma}
\newcommand{\la}{\lambda}
\newcommand{\bt}{\beta}
\newcommand{\eps}{\varepsilon}
\newcommand{\8}{\infty}
\newcommand{\Prob}{\operatorname{Prob}}
\newcommand{\Var}{\operatorname{Var}}
\title[Kolmogorov's LIL for noncommutative martingales]{Kolmogorov's law of the iterated logarithm for noncommutative martingales}
\author{Qiang Zeng}
\date{\today}
\address{Department of Mathematics, University of Illinois, Urbana, IL 61801}
\email{zeng8@illinois.edu}
\subjclass[2010]{Primary 46L53, 60F15}
\keywords{Law of the iterated logarithm, noncommutative martingales, quantum martingales, exponential inequality}
\begin{document}

\begin{abstract}
We prove Kolmogorov's law of the iterated logarithm for noncommutative martingales. The commutative case was due to Stout. The key ingredient is an exponential inequality proved recently by Junge and the author.
\end{abstract}
\maketitle

\section{Introduction}
In probability theory, law of the iterated logarithm (LIL) is among the most important limit theorems and has been studied extensively in different contexts. The early contributions in this direction for independent increments were made by Khintchine, Kolmogorov, Hartman--Wintner, etc; see \cite{Bau} for more history of this subject. Stout generalized Kolmogorov and Hartman--Wintner's results to the martingale setting in \cite{Sto,Sto2}. The extension of LIL for independent sums in Banach spaces were due to Kuelbs, Ledoux, Talagrand, Pisier, etc; see \cite{LT91} and the references therein for more details in this direction. In the last decade, there has been new development for LIL results of dependent random variables; see \cite{Wu, ZW} and the references therein for more details. However, it seems that the LIL in noncommutative (= quantum) probability theory has only been proved recently by Konwerska \cite{Ko08, Ko} for Hartman--Wintner's version. Even the Kolmogorov's LIL for independent sums in the noncommutative setting is not known. The goal of this paper is to prove Kolmogorov's version of LIL for noncommutative martingales.

Let us first recall Kolmogorov's LIL. Let $(Y_n)_{n\in\nz}$ be an independent sequence of square-integrable, centered, real random variables. Put $S_n=\sum_{i=1}^n Y_i$ and $s_n^2=\Var(S_n)=\sum_{i=1}^n\ez(Y_i^2)$. Here and in the following $\ez$ denotes the expectation and $\Var$ denotes the variance. For any $x>0$, we define the notation $L(x)=\max\{1, \ln\ln x\}$. In 1929, Kolmogorov proved that if $s_n^2\to\8$ and
\begin{equation}\label{grow}
 |Y_n|\le \al_n\frac{s_n}{\sqrt{L(s_n^2)}} ~~a.s.
\end{equation}
for some positive sequence $(\al_n)$ such that $\lim_{n\to\8}\al_n=0$, then
\begin{equation}\label{kol0}
  \limsup_{n\to\8} \frac{S_n}{\sqrt{s_n^2 L(s_n^2)}} = \sqrt{2} ~~a.s..
\end{equation}
Later on, Hartman--Wintner \cite{HW} proved that if $(X_n)$ is an i.i.d. sequence of real, centered square-integrable random variables with variance $\Var(X_i)=\si^2$, then
\[
 \limsup_{n\to\8} \frac{S_n}{\sqrt{n L(n)}} = \sqrt{2}\si ~~a.s..
\]
de Acosta \cite{deA} simplified the proof of Hartman--Wintner. To compare the two results, if the sequence $(Y_n)$ are i.i.d. and uniformly bounded, then the two results coincide. Apparently, Hartman--Wintner's LIL does not contain Kolmogorov's version as a special case. However, Kolmogorov's LIL can be used in a truncation procedure to prove other LIL results; see, e.g., \cite{Sto2}.

Kolmogorov's LIL was generalized to martingales by Stout \cite{Sto}. Let $(X_n,\F_n)_{n\ge 1}$ be a martingale with $\ez(X_n)=0$. Let $Y_n=X_n-X_{n-1}$ for $n\ge 1, X_0=0$  be the associated martingale differences. Put $s_n^2=\sum_{i=1}^n \ez[Y_i^2|\F_{i-1}]$. Then Stout proved that if $s_n^2\to\8$ and \eqref{grow} holds, then $\limsup_{n\to\8} {X_n}/\sqrt{s_n^2L(s_n^2)} = \sqrt{2} ~a.s.$.

To state our main results, let us set up the noncommutative framework. Throughout this paper, we consider a noncommutative probability space $(\N,\tau)$. Here $\N$ is a finite von Neumann algebra and $\tau$ a normal faithful tracial state, i.e., $\tau(xy)=\tau(yx)$ for $x,y\in\N$. For $1\le p< \8$, define $\|x\|_p =[\tau(|x|^{p})]^{1/p}$ and $\|x\|_\8=\|x\|$ for $x\in\N$. In this paper $\|\cdot\|$ will always denote the operator norm. The noncommutative $L_p$ space $L_p(\N,\tau)$ (or $L_p(\N)$ for short) is the completion of $\N$ with respect to $\|\cdot\|_p$. $\tau$-measurable operators affiliated to $(\N,\tau)$ are also called noncommutative random variables; see \cite{FK, Ter} for more details on the measurability and noncommutative $L_p$ spaces. Let $(\N_k)_{k=1,2, \cdots }\subset \N$ be a filtration of von Neumann subalgebras with conditional expectation $E_k: \N\to \N_k$. Then $E_k(1)
=1$ and $E_k(axb)=aE_k(x)b$ for $a,b\in\N_k$ and $x\in\N$. It is well known that $E_k$ extends to contractions on $L_
p(\N,\tau)$ for $p\ge 1$; see \cite{JX03}.

Following \cite{Ko08}, a sequence $(x_n)$ of $\tau$-measurable operators is said to be almost uniformly bounded by a constant $K\ge 0$, denoted by $\limsup_{n\to\8}x_n\underset{a.u.}{\le} K$, if for any $\eps>0$ and any $\de>0$, there exists a projection $e$ with $\tau(1-e)<\eps$ such that
\begin{equation}\label{aub}
\limsup_{n\to\8}\|x_n e\| \le K+\de;
\end{equation}
and $(x_n)$ is said to be bilaterally almost uniformly bounded by a constant $K\ge 0$, denoted by $\limsup_{n\to\8}x_n\underset{b.a.u.}{\le} K$, if \eqref{aub} is replaced by
\[\limsup_{n\to\8}\|e x_n e\|\le K+\de.\]
Clearly, $\limsup_{n\to\8}x_n\underset{a.u.}{\le} K$ implies $\limsup_{n\to\8}x_n\underset{b.a.u.}{\le} K$.

For a $\tau$-measurable operator $x$ and $t>0$, the generalized singular numbers \cite{FK} are defined by
\[\mu_t(x)=\inf\{s>0: \tau(1_{(s,\infty)}(|x|))\le t\}.\]
In this paper, we use $1_A(a)$ to denote the spectral projection of an operator $a$ on the Borel set $A$. According to \cite{Ko08}, a sequence of operators $(x_i)$ is said to be uniformly bounded in distribution by an operator $y$ if there exists $K>0$ such that $\sup_i\mu_t(x_i)\le K\mu_{t/K}(y)$ for all $t>0$. Let $(x_n)$ be a sequence of mean zero self-adjoint independent random variables. Konwerska \cite{Ko} proved that if $(x_n)$ is uniformly bounded in distribution by a random variable $y$ such that $\tau(|y|^2)=\si^2<\8$, then
\[
\limsup_{n\to\infty}\frac{1}{\sqrt{nL(n)}} \sum_{i=1}^n x_i \underset{b.a.u.}{\le}C\si.
\]
Note that if the sequence $(x_n)$ is i.i.d., which is the case in the original version of Hartman--Wintner's LIL, then $(x_n)$ is uniformly bounded in distribution by $x_1$. Essentially, the condition of uniform boundedness in distribution requires the sequence to be almost identically distributed.

Our main result is an extension of Stout's result to the noncommutative setting. Let $(x_n)_{n\ge 0}$ be a noncommutative self-adjoint martingale with $x_0=0$ and $d_i=x_i-x_{i-1}$ the associated martingale differences. Define $s_n^2=\|\sum_{i=1}^n E_{i-1}(d_i^2)\|_\8$ and $u_n=[L(s_n^2)]^{1/2}$.
\begin{theorem}\label{koli}
 Let $0=x_0,x_1,x_2,\ldots$ be a self-adjoint martingale in $(\N,\tau)$. Suppose $s_n^2\to\8$ and $\|d_n\|_\8\le \al_n s_n/u_n$ for some sequence $(\al_i)$ of positive numbers such that $\al_n\to 0$ as $n\to\8$. Then
 \[\limsup_{n\to\infty}\frac{x_n}{s_n u_n}\underset{a.u.}{\le}2.\]
\end{theorem}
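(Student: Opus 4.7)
The plan is to mimic Stout's classical proof: establish the upper bound along a geometric subsequence via an exponential tail bound and a Borel--Cantelli argument, then fill the gaps between subsequence indices. The classical Bernstein inequality is replaced by the noncommutative exponential inequality of Junge and the author referenced in the abstract, and the classical Borel--Cantelli lemma by its noncommutative counterpart, which delivers a single projection $e$ with $\tau(1-e)$ small on which the almost uniform bound will hold.

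First I fix $c>2$ and $\theta>1$ close to $1$, and define $n_k$ to be the smallest index with $s_n^2\geq\theta^k$. The hypothesis $\|d_n\|_\infty\leq \alpha_n s_n/u_n$ with $\alpha_n\to 0$ forces $s_{n_k}^2\in[\theta^k,(1+o(1))\theta^{k+1}]$ and is precisely what the Junge--Zeng exponential inequality needs at the scale $\lambda_k=c\sqrt{s_{n_k}^2 L(s_{n_k}^2)}$. That inequality should yield a projection $e_k$ with $\|x_{n_k}e_k\|_\infty\leq \lambda_k$ and $\tau(1-e_k)\leq C\exp(-\lambda_k^2/(C's_{n_k}^2))$, where the constant $C'$ (coming from the noncommutative inequality) is tuned so that the critical value $c=2$ corresponds to $c^2/C'=1$; this is the source of the $2$ rather than the classical $\sqrt{2}$ in the statement. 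Since $L(s_{n_k}^2)\sim\ln k$ for large $k$, the right-hand side is $\lesssim k^{-c^2/C'}$, which is summable for $c>2$. A noncommutative Borel--Cantelli argument (intersecting the $e_k$ for large $k$) then produces, for every $\eps>0$, a projection $e$ with $\tau(1-e)<\eps$ satisfying $\|x_{n_k}e\|_\infty\leq \lambda_k$ for all sufficiently large $k$.

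To fill between $n_k$ and $n_{k+1}$, note that $s_n^2\leq s_{n_{k+1}}^2\leq(1+o(1))\theta s_{n_k}^2$ for $n_k\leq n<n_{k+1}$, so the normalization $s_n u_n$ agrees with $s_{n_k}u_{n_k}$ up to a factor arbitrarily close to $1$ once $\theta$ is close to $1$. I would apply the Junge--Zeng inequality to every partial sum in the block and intersect the resulting projections with $e$, or invoke a maximal form of that inequality if available, to obtain a uniform-in-$n$ bound on the block; the extra factor of $n_{k+1}-n_k$ in the summation can be absorbed by slightly enlarging $c$. Letting $c\downarrow 2$ and $\theta\downarrow 1$ then gives $\limsup_n x_n/(s_n u_n)\underset{a.u.}{\leq}2$. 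The main obstacle is precisely this last step: passing from a bound at the endpoints $n_k$ to a uniform bound on the whole block is more delicate noncommutatively than classically, because Doob-type maximal inequalities do not deliver pointwise control and one must carefully manage spectral projections so that the cumulative trace defect remains below the prescribed $\eps$.
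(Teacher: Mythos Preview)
Your overall architecture---geometric blocking of the index set, the Junge--Zeng exponential inequality, and a noncommutative Borel--Cantelli argument---matches the paper's strategy. You also correctly isolate the real difficulty: controlling $\sup_{k_n<m\le k_{n+1}}\|x_m e\|$ uniformly over a block, not just at the endpoints.

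However, your proposed fix for this step does not work, and you essentially concede as much in your final sentence. Applying the exponential inequality separately to each $x_m$ in a block and intersecting the resulting projections produces a total trace defect of order $(k_{n+1}-k_n)\cdot n^{-c^2/C'}$, and there is no a priori bound on $k_{n+1}-k_n$: the conditional variance $s_n^2$ can grow arbitrarily slowly, so a single block may contain arbitrarily many indices. Thus ``absorbing the factor $n_{k+1}-n_k$ by slightly enlarging $c$'' is not justified, and ``a maximal form if available'' is not a proof.

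The paper resolves this obstacle by working in $L_p(\ell_\infty^c)$ rather than with individual spectral projections. Concretely: the noncommutative Chebyshev inequality bounds $\Prob_c(\sup_m\|\cdot\|>t)$ by $t^{-p}\|(\cdot)\|_{L_p(\ell_\infty^c)}^p$; the asymmetric Doob inequality (Theorem~\ref{doob}) then reduces this to a single $L_p$ norm $\|x_{k_{n+1}}\|_p^p$; and the elementary pointwise bound $|u|^p\le p^p e^{-p}(e^u+e^{-u})$ converts that $L_p$ norm into a trace of exponentials, to which Lemma~\ref{expin} applies. Finally one optimizes over $p$ (taking $p=\lambda\beta(1+\delta)$) to recover the same decay $n^{-(1+\delta)^2/(1+\eps)}$ as a direct exponential tail would give, but now for the supremum over the whole block. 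This $L_p$-optimization trick is the missing ingredient in your proposal, and it is precisely what makes the noncommutative Doob inequality deliver the ``pointwise'' control you thought was unavailable.
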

So far as we know, this is the first result on the LIL for noncommutative martingales. A natural question is to ask for the lower bound of LIL. As observed in \cite{Ko08}, however, one can only expect an upper bound for LIL in the general noncommutative setting. Indeed, consider a free sequence of semicircular random variables $(x_n)$ (the so-called free Gaussian random variables \cite{VDN}) such that the law of $x_n$ is $\ga_{0,2}$ (in notation, $x_n\sim \ga_{0,2}$) for all $n$. Here $\ga_{0,2}$ has density function $p(x)=\frac1{2\pi}\sqrt{4-x^2}$ for $-2\le x\le 2$. Then it is well known in free probability theory that
\[
\frac1{\sqrt{n}}\sum_{i=1}^n x_i \sim \ga_{0,2}.
\]
It follows that $\lim_{n\to\8}\sum_{i=1}^n x_i /\sqrt{n L(n)}=0$ in the norm topology since a random variable with law $\ga_{0,2}$ is bounded. Therefore there is no reasonable notion of the positive LIL lower bound for the free semicircular sequence. Comparing our LIL results with classical ones, we lose a constant of $\sqrt{2}$. However, since there is no hope to obtain an LIL lower bound in the general noncommutative theory, we are more interested in the order of the fluctuation for general noncommutative martingales. It is also commonly acknowledged that going from the commutative theory to the noncommutative setting usually requires considerably more technologies \cite{PX03}. Due to these reasons, it seems fair to have the constant 2 in the noncommutative martingale setting.

We will recall some preliminary facts in Section 2. The main result will be proved in Section 3. We will discuss some further questions in Section 4.

\section{Preliminaries}
In this section, we give some basic definitions and collect some preliminary facts. Let us recall the vector valued noncommutative $L_p$ spaces for $1\le p\le\8$ introduced by Pisier \cite{Pis} and Junge  \cite{Ju}. Let $(x_n)$ be a sequence in $L_p(\N)$ and define
\[
 \|(x_n)\|_{L_p(\ell_\8)}=\inf \{\|a\|_{2p}\|b\|_{2p}: x_n=ay_nb, \|y_n\|_\8\le 1\}.
\]
Then $L_p(\ell_\8)$ is defined to be the closure of all sequences with $\|(x_n)\|_{L_p(\ell_\8)}<\8$. It was shown in \cite{JX07} that if every $x_n$ is self-adjoint, then
\[
 \|(x_n)\|_{L_p(\ell_\8)}=\inf\{\|a\|_p: a\in L_p(\N), a\ge0, -a\le x_n\le a \text{ for all } n\in \nz\}.
\]
Similarly, Junge and Xu introduced in \cite{JX07} the space $L_p(\ell_\8^c)$ with norm
\begin{align*}
  &\|(x_i)_{i\in I}\|_{L_p(\ell^c_\8)} \\
  =~& \inf\{\|a\|_p: a\in L_p(\N), a\ge0, -a\le x_i^*x_i\le a \text{ for all } i\in I\}\\
  =~& \inf \{\|b\|_{p}: x_i=y_i b, \|y_i\|_\8\le 1 \mbox{ for all } i\in I\}.
\end{align*}
%They also defined $L_p(c_0)\subset L_p(\ell_\8)$ as the space of all $(x_n)\subset L_p(\N)$ such that there are $a,b\in L_{2p}(\N)$ and $(y_n)\subset \N$ satisfying
%\begin{equation}\label{lpc0}
% x_n=ay_nb \quad \mbox{and}\quad \lim_{n\to\8}\|y_n\|_\8 = 0.
%\end{equation}
%Konwerska \cite{Ko08} used the space $L_p(c)\subset L_p(\ell_\8)$, which is defined similar to $L_p(c_0)$ with \eqref{lpc0} replaced by
%\[
% x_n=ay_nb \quad \mbox{and}\quad \lim_{n\to\8}\|y_n-y_\8\|_\8 = 0 \mbox{ for some } y_\8\in \N.
%\]
%The norms of $L_p(c_0)$ and $L_p(c)$ are the same as $L_p(\ell_\8)$. Clearly $L_p(c_0)\subset L_p(c)$. It is easy to verify that all the spaces we mentioned above are Banach spaces.

The following result is the noncommutative asymmetric version of Doob's maximal inequality proved by Junge \cite{Ju}. We add a short proof to elaborate on the constant which is implicit in the original paper.
\begin{theorem}\label{doob}
 Let $4\le p\le \8$. Then, for any $x\in L_p(\N)$, there exists $b\in L_p(\N)$ and a sequence of contractions $(y_n)\subset\N$ such that
 \[
  \|b\|_p\le 2^{2/p}\|x\|_p \quad \mbox{and} \quad E_n x= y_n b, \text{ for all } n\ge0.
 \]
\end{theorem}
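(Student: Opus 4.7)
The plan is to establish the factorization at the two endpoints $p=\infty$ and $p=4$ and then interpolate for $p\in[4,\infty]$ on the scale of vector-valued spaces $L_p(\ell_\infty^c)$. The endpoint $p=\infty$ is essentially trivial: taking $b=\|x\|_\infty\cdot 1$ and $y_n=\|x\|_\infty^{-1}E_n x$, which is a contraction because $E_n$ is a contraction on $\N$, gives the statement with constant $1=2^{2/\infty}$.

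The heart of the matter is the endpoint $p=4$. The key idea is to reduce the asymmetric (one-sided) factorization to a symmetric positive Doob-type bound in $L_2$ via the Kadison--Schwarz inequality applied to the conditional expectation: for every $n\ge 0$,
\[
(E_n x)^*(E_n x)\le E_n(x^*x).
\]
Since $(E_n(x^*x))_n$ is a positive $L_2$-martingale, I would invoke Junge's noncommutative positive Doob inequality in $L_2$ to produce $c\in L_2(\N)$ with $c\ge 0$, $E_n(x^*x)\le c$ for all $n$, and $\|c\|_2\le 2\|x^*x\|_2=2\|x\|_4^2$. Setting $b=c^{1/2}$ then gives $\|b\|_4\le\sqrt 2\,\|x\|_4$ together with $(E_n x)^*(E_n x)\le b^*b$. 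A Douglas-type factorization inside $\N$---define $y_n=(E_n x)b^{-1}$ using a pseudo-inverse on the right support of $b$, or equivalently regularise $b$ to $b+\eps$ and pass to the limit---then yields contractions $y_n\in\N$ with $E_n x=y_n b$, because $b\,y_n^*y_n\,b=(E_n x)^*(E_n x)\le b^2$ forces $y_n^*y_n\le 1$.

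For general $4\le p\le\infty$, I would appeal to complex interpolation on the scale $L_p(\ell_\infty^c)$. The statement to prove is equivalent, by the definition of the $L_p(\ell_\infty^c)$ norm, to $\|(E_n x)\|_{L_p(\ell_\infty^c)}\le 2^{2/p}\|x\|_p$. Since the operator $x\mapsto(E_n x)_n$ has norm $\le 1$ at $p=\infty$ and $\le\sqrt 2$ at $p=4$, interpolation with parameter $\theta=4/p$ yields norm at most $1^{1-\theta}(\sqrt 2)^\theta=2^{2/p}$ at $p$. The hard part is ensuring the sharp constant $2$ in the $L_2$ positive Doob step---any worse constant there would inflate the final bound beyond $2^{2/p}$; everything else (Kadison--Schwarz, Douglas factorization, complex interpolation on $L_p(\ell_\infty^c)$) is by now standard machinery.
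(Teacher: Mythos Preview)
Your argument is correct, but it takes a different route from the paper's. The paper does not argue by endpoints and interpolation; it simply invokes Junge's three-term factorization (Corollary 4.6 in \cite{Ju}) with parameters $q=\infty$, $r=p$ to get $E_n x = a z_n b$ with $a,z_n\in\N$ and $b\in L_p(\N)$, absorbs $a$ into the contraction, and then tracks the constant as $c_1^{1/2}c_{p/(p-2)}^{1/2}\le 2^{2/p}$ using Junge's bound $c_s\le 2^{2(s-1)/s}$ for $1\le s\le 2$ on the dual Doob constants. Your approach, by contrast, is more self-contained: the Kadison--Schwarz reduction $(E_n x)^*(E_n x)\le E_n(x^*x)$ plus positive Doob plus Douglas factorization gives the result from first principles (modulo the $L_2$ Doob constant, which is the same $c_2\le 2$ the paper uses).

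One remark: the interpolation step is both the most delicate part of your outline and entirely avoidable. Complex interpolation of the asymmetric spaces $L_p(\ell_\infty^c)$ is not as standard as you suggest and would need a precise citation. But you do not need it: your $p=4$ argument works verbatim at every $p\in[4,\infty)$. Indeed, $x^*x\in L_{p/2}$, Junge's positive Doob in $L_{p/2}(\ell_\infty)$ gives $c\ge E_n(x^*x)$ with $\|c\|_{p/2}\le c_{(p/2)'}\|x^*x\|_{p/2}$, and since $(p/2)'=p/(p-2)\in[1,2]$ one has $c_{p/(p-2)}\le 2^{4/p}$, whence $\|c^{1/2}\|_p\le 2^{2/p}\|x\|_p$. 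This recovers exactly the paper's constant, and in fact lands on the very same dual Doob estimate $c_{p/(p-2)}$ that the paper reads off from Junge's Corollary 4.6. So the two proofs are cousins: yours unpacks the column case of Junge's factorization via Kadison--Schwarz, while the paper quotes it as a black box.
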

\begin{proof}
  This follows from \cite{Ju}*{Corollary 4.6}. Indeed, setting $r=p\ge 4$ and $q=\8$, we find $E_nx=az_n b$ for $a, z_n\in \N$ and $b\in L_p(\N)$. Let $y_n=az_n/\|az_n\|\in \N$ and $b'=\|az_n\|b\in L_p(\N)$. Then $(y_n)$ is a sequence of contractions, $E_nx=y_n b'$, and
  \[
  \|b'\|_p\le \|a\|_\8\|b\|_p\sup_n\|z_n\|_\8\le c(p,q,r)\|x\|_p,
  \]
  where $c(p,q,r)\le c^{1/2}_{q/(q-2)} c^{1/2}_{r/(r-2)} =c_1^{1/2} c^{1/2}_{p/(p-2)}$ and $c_p$ is the constant in the dual Doob's inequality. Note that $1\le p/(p-2)\le 2$. By Lemma 3.1 and Lemma 3.2 of \cite{Ju}, we find that $c_p\le 2^{2(p-1)/p}$ for $1\le p\le 2$. It follows that $c(p,q,r)\le 2^{2/p}$.
\end{proof}
Suppose $(x_i)_{m\le i\le n}$ is a martingale in $L_p(\N)$. According to Theorem \ref{doob}, there exist $b\in L_p(\N)$ and contractions $(y_i)_{m\le i\le n}\subset \N$ such that $x_i= y_i b$ for $m\le i\le n$ and $\|b\|_p\le 2^{2/p}\|x_n\|_p$ for $p\ge 4$. It follows that
\[
 \|(x_i)_{m\le i\le n}\|_{L_p(\ell_\8^c)} \le 2^{2/p}\|x_n\|_p.
\]
Doob's inequality will be used in this form in the proof of our main result.

Our proof of LIL for martingales relies on the following exponential inequality proved in \cite{JZ}. Its proof was based on Oliveira's approach to the matrix martingales \cite{Ol}.
\begin{lemma}\label{expin}
Let $(x_k)$ be a self-adjoint martingale with respect to the filtration $(\N_k, E_k)$ and $d_k=x_k-x_{k-1}$ be the associated martingale differences such that
\begin{enumerate}
\item[i)] $\tau(x_k)=x_0=0$; {\rm ii)} $\|d_k\|\le M$; {\rm iii)} $\sum_{k=1}^n E_{k-1}(d_k^2) \le D^2 1$.
\end{enumerate}
Then \[\tau(e^{\la x_n}) \le \exp[(1+\eps)\la^2 D^2]\] for all $\eps\in(0,1]$ and all $\la\in[0, \sqrt{\eps}/(M+M\eps)]$.
\end{lemma}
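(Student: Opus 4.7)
The plan is to adapt the Ahlswede–Winter / Oliveira strategy for matrix martingales to the tracial von Neumann algebra $(\N,\tau)$. The heart of the argument is a two-part reduction: a per-step operator-valued cumulant bound that trades $e^{\la d_k}$ for something controlled by the predictable variance $E_{k-1}(d_k^{2})$, followed by an iteration that collapses the product over $k$ into the single global variance bound $D^{2}\cdot 1$.

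First I would establish the operator inequality
\[
E_{k-1}\bigl(e^{\la d_k}\bigr)\;\le\;\exp\!\bigl(\phi(\la M)\,\la^{2}\,E_{k-1}(d_k^{2})\bigr),\qquad \phi(c):=\frac{e^{c}-1-c}{c^{2}}.
\]
The starting point is the elementary functional-calculus inequality $e^{y}\le 1+y+\phi(c)\,y^{2}$ for every self-adjoint $y$ with $\|y\|\le c$; this is an operator inequality because $\phi$ is increasing, so $\phi(y)\le\phi(c)\cdot 1$, and $\phi(y)$ commutes with $y^{2}$. Applying it with $y=\la d_k$, taking $E_{k-1}$, and using $E_{k-1}(d_k)=0$ gives
\[ E_{k-1}(e^{\la d_k})\;\le\;1+\phi(\la M)\la^{2}E_{k-1}(d_k^{2})\;\le\;\exp\bigl(\phi(\la M)\la^{2}E_{k-1}(d_k^{2})\bigr), \]
the last step being $1+A\le e^{A}$ for $A\ge 0$.

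Next comes the iteration, which is the step imported from Oliveira \cite{Ol}. Lieb's concavity theorem says that $A\mapsto\tau\bigl(\exp(H+\log A)\bigr)$ is concave on the positive cone for fixed self-adjoint $H$; its conditional Jensen-type variant lets one pull $E_{k-1}$ across the trace exponential. Applied with $H=\la x_{k-1}$ (which is $\N_{k-1}$-measurable) and $A=e^{\la d_k}$, together with the operator bound $\log E_{k-1}(e^{\la d_k})\le \phi(\la M)\la^{2}E_{k-1}(d_k^{2})$ (which follows from operator monotonicity of $\log$ and $\log(1+B)\le B$), this yields at the top level
\[
\tau\bigl(e^{\la x_n}\bigr)\;\le\;\tau\!\bigl(\exp\bigl(\la x_{n-1}+\phi(\la M)\la^{2}E_{n-1}(d_n^{2})\bigr)\bigr).
\]
Iterating down to $x_{0}=0$ --- the subtle point, since the cumulant proxies $\phi(\la M)\la^{2}E_{k-1}(d_k^{2})$ do not commute with the earlier partial sums and so the Lieb step must be re-applied at each level --- produces
\[
\tau\bigl(e^{\la x_n}\bigr)\;\le\;\tau\!\left(\exp\!\left(\phi(\la M)\la^{2}\sum_{k=1}^{n}E_{k-1}(d_k^{2})\right)\right).
\]

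Finally, the hypothesis $\sum_{k=1}^{n}E_{k-1}(d_k^{2})\le D^{2}\cdot 1$ collapses the right-hand side to the scalar $\exp\bigl(\phi(\la M)\la^{2}D^{2}\bigr)$, and the elementary estimate $\phi(c)\le 1+\eps$ on the range $c\in[0,\sqrt{\eps}/(1+\eps)]$ (easily verified, since $\phi$ is continuous and $\phi(1/2)<1$ throughout this range) finishes the proof. The main obstacle is the iteration step: while Lieb's theorem is classical for matrices, its use in the general tracial setting --- with arbitrary conditional expectations $E_{k-1}:\N\to\N_{k-1}$ and no commutation between $E_{k-1}(d_k^{2})$ and the earlier $x_{j}$'s --- is exactly what Oliveira's machinery provides and what must be verified to carry over to the semifinite von Neumann algebra.
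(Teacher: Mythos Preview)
The paper does not give its own proof of this lemma; it simply quotes the inequality from the companion paper \cite{JZ} and records that the argument there ``was based on Oliveira's approach to the matrix martingales \cite{Ol}.'' Your outline follows precisely that template --- the Bennett-type per-step bound $E_{k-1}(e^{\la d_k})\le\exp\bigl(\phi(\la M)\la^{2}E_{k-1}(d_k^{2})\bigr)$, the Lieb-concavity iteration that accumulates the predictable variances inside a single trace exponential, and the scalar collapse via hypothesis (iii) --- so as a comparison with the present paper there is nothing to distinguish: you have reproduced the indicated strategy.

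That said, you are right to single out the iteration as the genuine obstacle, and it deserves a sharper statement than you give it. In the matrix-martingale setting of Tropp/Oliveira the underlying probability space is classical: at stage $k$ one conditions on $\mathcal{F}_{k-1}$, treats \emph{everything} $\mathcal{F}_{k-1}$-measurable --- in particular the already-accumulated terms $\sum_{j>k}\phi(\la M)\la^{2}E_{j-1}(d_j^{2})$ --- as a fixed matrix $H$, and applies Lieb's concavity pointwise in $\omega$. In the fully tracial setting there is no $\omega$: after one step the accumulated correction $E_{n-1}(d_n^{2})$ lies in $\N_{n-1}$ but not in $\N_{n-2}$, so the naive ``set $H=\la x_{n-2}+\phi(\la M)\la^{2}E_{n-1}(d_n^{2})\in\N_{n-2}$ and apply conditional Jensen for $E_{n-2}$'' does not parse. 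Making this descent work --- formulating and proving the correct tracial Lieb/Jensen inequality with conditional expectations in place of classical averaging --- is exactly the content supplied by \cite{JZ}, and your sketch would need to state precisely which operator inequality is being invoked at each level and why it holds in $(\N,\tau)$. One small side remark: your final numerical check is generous, since $\sqrt{\eps}/(1+\eps)\le 1/2$ on $(0,1]$ and $\phi(1/2)=4(e^{1/2}-3/2)<1\le 1+\eps$, so that step is never in doubt.
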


Another important tool in our proof is a noncommutative version of Borel--Cantelli lemma. To state this result, we recall from \cite{Ko08} that for a self-adjoint sequence $(x_i)_{i\in I}$ of random variables, the column version of tail probability is by definition
\begin{align*}
    &{\rm Prob}_c\Big(\sup_{i\in I} \|x_i\| > t\Big)\\
 = &\inf\{s>0: \exists \text{ a projection $e$ with } \tau(1-e)<s\\
 &\text{ and $\|x_i e\|_\8\le t$ for all $i\in I$} \}
\end{align*}
for $t>0$. It is immediate that
\begin{equation}\label{ord1}
  {\rm Prob}_c(\sup_{i\in I} \|x_i\| > t)\le {\rm Prob}_c(\sup_{i\in I} \|x_i\| > r)
\end{equation}
for $t\ge r$ and that if $a_i\ge 1$ for $i\in I$, then
\begin{equation}\label{ord2}
  {\rm Prob}_c(\sup_{i\in I} \|x_i\| > t)\le {\rm Prob}_c(\sup_{i\in I} \|a_i x_i\| > t).
\end{equation}
Using the notation $\Prob_c$, we state two lemmas which are taken from \cite{Ko08}.
\begin{lemma}
[Noncommutative Borel--Cantelli lemma]\label{nbcl}
Let $\cup_n I_n=\{n\in\nz: n\ge n_0\}$ for some $n_0\in \nz$ and $(z_n)$ be a sequence of self-adjoint random variables. If for any $\delta>0$,
\[\sum_{n\ge n_0}{\rm Prob}_c\Big(\sup_{m\in I_n} \| z_m\| > \ga +\delta\Big)<\infty,\] then
\[\limsup_{n\to \8}  z_n \underset{a.u.}{\le} \ga.\]
\end{lemma}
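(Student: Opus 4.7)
The plan is to mimic the classical Borel--Cantelli argument, replacing the ``bad event'' at stage $n$ by a projection that the definition of $\Prob_c$ supplies, and using subadditivity of $\tau$ on a countable join of projections in place of the countable additivity of probability used in the commutative setting.

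Concretely, I would fix $\de>0$ and $\eps>0$ and set $a_n=\Prob_c(\sup_{m\in I_n}\|z_m\|>\ga+\de)$. Since $\sum_{n\ge n_0}a_n<\8$ by hypothesis, first pick $N\ge n_0$ with $\sum_{n\ge N}a_n<\eps/2$. For each $n\ge N$ the definition of $\Prob_c$ supplies a projection $e_n\in\N$ with
\[\tau(1-e_n)<a_n+\eps\cdot 2^{-(n-N)-2}\quad\text{and}\quad \|z_m e_n\|\le\ga+\de\text{ for every }m\in I_n.\]
The candidate for the almost-uniform control is $e=\bigwedge_{n\ge N}e_n$, which exists since the projection lattice of $\N$ is complete. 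Because $1-e=\bigvee_{n\ge N}(1-e_n)$ and $\tau$ is subadditive on joins of projections in a finite von Neumann algebra, I would bound
\[\tau(1-e)\le\sum_{n\ge N}\tau(1-e_n)<\eps/2+\sum_{k=0}^\8\eps\cdot 2^{-k-2}=\eps.\]

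Since $e\le e_n$ for every $n\ge N$, whenever $m\in I_n$ with $n\ge N$ we have $\|z_m e\|=\|z_m e_n e\|\le\|z_m e_n\|\le\ga+\de$. In the intended applications the covering $(I_n)$ is arranged so that every sufficiently large $m$ lies in some $I_n$ with $n\ge N$ (for example $I_n=\{n\}$, or dyadic blocks $\{2^n,\ldots,2^{n+1}-1\}$), and under this condition the uniform estimate above gives $\limsup_{m\to\8}\|z_m e\|\le\ga+\de$. Since $\eps,\de>0$ were arbitrary, this is exactly $\limsup_{n\to\8}z_n\underset{a.u.}{\le}\ga$. I do not expect any deep obstacle here; the only care needed is the summability of the slack terms $\eps\cdot 2^{-(n-N)-2}$ and the noncommutative analog of countable subadditivity, $\tau(\bigvee_n(1-e_n))\le\sum_n\tau(1-e_n)$, which is what lets the projection lattice play the role of the $\sigma$-algebra of events in the classical argument.
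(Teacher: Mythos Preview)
The paper does not give its own proof of this lemma; it is quoted from \cite{Ko08} without argument. Your proof is the standard noncommutative adaptation of Borel--Cantelli and is correct: extract witnessing projections $e_n$ from the infimum defining $\Prob_c$ with a summable slack, set $e=\bigwedge_{n\ge N}e_n$, and use the countable subadditivity $\tau\big(\bigvee_n(1-e_n)\big)\le\sum_n\tau(1-e_n)$ (Kaplansky's formula plus normality of $\tau$) together with $e\le e_n$ to transfer the bounds $\|z_m e\|\le\|z_m e_n\|\le\ga+\de$.

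Your caveat about the covering is also well placed. As literally stated the lemma only assumes $\bigcup_n I_n=\{m\ge n_0\}$, whereas your argument needs that for every $N$ the tail union $\bigcup_{n\ge N}I_n$ still contains all sufficiently large $m$; without this one can manufacture counterexamples (e.g.\ a single $I_{n_0}$ covering everything and all other $I_n$ empty). In the paper's only use of the lemma the blocks $I_n=(k_n,k_{n+1}]$ are consecutive intervals marching to infinity, so the issue does not arise, and the version in \cite{Ko08} presumably carries such a hypothesis. You are right to make it explicit.
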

\begin{lemma}[Noncommutative Chebyshev inequality]\label{cheb}
Let $(x_i)_{i\in I}$ be a self-adjoint sequence of random variables. For $t > 0$ and $1\le p <\8$,
 $$\Prob_c(\sup_n \|x_n\| > t )\le t^{-p}\|x\|^p_{L_p(\ell_\8^c)}.$$
\end{lemma}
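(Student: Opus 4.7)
The plan is to exploit the factorization description of the column $L_p(\ell_\8^c)$ norm,
\[
\|(x_i)\|_{L_p(\ell_\8^c)} = \inf\{\|b\|_p : x_i = y_i b,\; \|y_i\|_\8 \le 1 \text{ for all } i\in I\},
\]
together with the standard scalar noncommutative Chebyshev inequality applied to the single positive operator $|b|$. The point is that because the right factor $b$ is common to all indices, one spectral projection of $|b|$ will simultaneously control every $\|x_i e\|_\8$.

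First, given any $\eps>0$, I would pick a factorization $x_i = y_i b$ with $\|y_i\|_\8 \le 1$ for every $i\in I$ and $\|b\|_p^p \le (1+\eps)\|(x_i)\|_{L_p(\ell_\8^c)}^p$; this is possible because the defining infimum can be approached to within any multiplicative slack. Next, I would set $e = 1_{[0,t]}(|b|)$, the spectral projection of $|b|$ onto $[0,t]$. Since $e$ commutes with $|b|$ and $|b|^2 e$ has spectrum in $[0,t^2]$, functional calculus yields $\|be\|_\8^2 = \|e\,b^*b\,e\|_\8 = \||b|^2 e\|_\8 \le t^2$, so $\|be\|_\8 \le t$; therefore
\[
\|x_i e\|_\8 = \|y_i (be)\|_\8 \le \|y_i\|_\8 \|be\|_\8 \le t
\]
holds uniformly in $i\in I$. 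For the complementary projection $1-e = 1_{(t,\8)}(|b|)$, the scalar inequality $1_{(t,\8)}(s) \le t^{-p} s^p$ lifts by functional calculus to $1-e \le t^{-p}|b|^p$, which under $\tau$ gives
\[
\tau(1-e) \le t^{-p}\tau(|b|^p) = t^{-p}\|b\|_p^p \le t^{-p}(1+\eps)\|(x_i)\|_{L_p(\ell_\8^c)}^p.
\]

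By the definition of $\Prob_c$, the projection $e$ witnesses $\Prob_c(\sup_{i} \|x_i\| > t) \le t^{-p}(1+\eps)\|(x_i)\|_{L_p(\ell_\8^c)}^p$, and sending $\eps\to 0$ finishes the argument. I do not anticipate any serious obstacle: the only bookkeeping item is the distinction between strict and non-strict inequalities in the definition of $\Prob_c$, which is precisely the reason for introducing the slack factor $(1+\eps)$ at the start. Self-adjointness of the sequence $(x_i)$ is not needed for this reduction, though it is part of the framework in which $\Prob_c$ was originally defined.
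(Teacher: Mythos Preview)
Your argument is correct. The paper itself does not supply a proof of this lemma: it is quoted from \cite{Ko08}, so there is no ``paper's own proof'' to compare against. That said, the route you take---choose a near-optimal factorization $x_i=y_i b$, cut with the spectral projection $e=1_{[0,t]}(|b|)$, and bound $\tau(1-e)$ by the scalar Chebyshev inequality for $|b|$---is exactly the standard proof of this fact and is what one would expect to find in the cited reference. Your handling of the strict/non-strict inequality in the definition of $\Prob_c$ via the slack $(1+\eps)$ is fine, and your remark that self-adjointness plays no role in the bound is also correct.
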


\section{Law of the iterated logarithm}
According to \cite{Bau}, the original proof of Kolmogorov's LIL is comparably expensive as that of Hartman--Wintner. However, our proof of Kolmogorov's LIL here seems to be relatively easier than (the upper bound of) Hartman--Wintner's version for the commutative case due to the exponential inequality (Lemma \ref{expin}).
\begin{proof}[Proof of Theorem \ref{koli}]
 Let $\eta\in(1,2)$ be a constant which we will determine later. To avoid annoying subscripts, we write $s(k_i)=s_{k_i}$ in the following. Using the stopping rule in \cite{Sto}, we define $k_0=0$ and for $n\ge 1$,
 \[
  k_n=\inf\{j\in\nz: s_{j+1}^2\ge \eta^{2n}\}.
 \]
Then $s_{k_n+1}^2\ge \eta^{2n}$ and $s_{k_n}^2 < \eta^{2n}$. Note that given $\eps'>0$ there exists $N_1(\eps')>0$ such that for $n>N_1(\eps')$,
\begin{align*}
  &s^2_{k_{n}+1}u^2_{k_{n}+1}/(s(k_{n+1})^2u(k_{n+1})^2)\\
  \ge~& \eta^{-2}\ln \ln \eta^{2n}/\ln\ln \eta^{2(n+1)}\ge (1-\eps')^2\eta^{-2}.
\end{align*}
Then $s_mu_m\ge (1-\eps')\eta^{-1} s(k_{n+1}) u(k_{n+1})$ for $k_n< m\le k_{n+1}$. For any $\de'>0$, we can find $\de,\eps'>0$ and $\eta\in (1,2)$ such that $1+\de'>\eta(1+\de)(1-\eps')^{-1}$. Fix $\bt>0$ which will be determined later. Using the notation ${\rm Prob}_c$ with order relations \eqref{ord1} and \eqref{ord2}, we have for $n>N_1(\eps')$
\begin{equation}\label{redu}
  \begin{aligned}
 &{\rm Prob}_c\Big(\sup_{k_n< m \le k_{n+1}} \Big\|\frac{x_m }{s_m u_m}\Big\| > \beta (1+\delta')\Big) \\
 \le~ & {\rm Prob}_c\Big(\sup_{k_n< m \le k_{n+1}} \Big\|\frac{\la x_m}{s(k_{n+1}) u(k_{n+1})}\Big\|> \la \beta (1+\delta)\Big).
 \end{aligned}
\end{equation}
By Lemma \ref{cheb} and Theorem \ref{doob}, we have for $p\ge4$,
\begin{align*}
 &{\rm Prob}_c\Big(\sup_{k_n < m \le k_{n+1}} \Big\|\frac{\la x_m}{s(k_{n+1}) u(k_{n+1})}\Big\|> \la \beta (1+\delta)\Big)\\
\le~& (\la \beta (1+\delta))^{-p}\Big\|\Big(\frac{\la x_m}{s(k_{n+1}) u(k_{n+1})}\Big)_{k_n < m \le k_{n+1}}\Big\|^p_{L_p(\ell^c_{\8})}\\
\le ~&(\la \beta (1+\delta))^{-p}\big(2^{2/p}\big)^p\Big\|\frac{\la x(k_{n+1})}{s(k_{n+1}) u(k_{n+1})}\Big\|^p_p.
\end{align*}
Using the elementary inequality $|u|^p\le p^pe^{-p}(e^u+e^{-u})$, functional calculus and Lemma \ref{expin} with $M=\al(k_{n+1})s(k_{n+1})/{u(k_{n+1})}$, $D^2=s(k_{n+1})^2$, we find
\begin{align*}
 &\Big\|\frac{\la x(k_{n+1})}{s(k_{n+1}) u(k_{n+1})}\Big\|^p_p\\
 \le~& p^pe^{-p}\tau \left(\exp\Big(\frac{\la x(k_{n+1})}{s(k_{n+1}) u(k_{n+1})}\Big) + \exp\Big(-\frac{\la x(k_{n+1})}{s(k_{n+1}) u(k_{n+1})}\Big) \right) \\
 \le ~& 2\Big(\frac{p}{e}\Big)^p \exp\Big(\frac{(1+\eps)\la^2}{u(k_{n+1})^2}\Big)
\end{align*}
provided $0\le \la \le \frac{\sqrt{\eps}u(k_{n+1})^2}{(1+\eps)\al(k_{n+1})}$ and $0<\eps\le1$. Hence we obtain
\begin{align*}
 &{\rm Prob}_c\Big(\sup_{k_n< m \le k_{n+1}} \Big\|\frac{\la x_m}{s(k_{n+1}) u(k_{n+1})} \Big\|> \la \beta (1+\delta)\Big) \\
\le ~&8\left(\frac{ p}{\la \bt (1+\de)e}\right)^p\exp\left(\frac{(1+\eps)\la^2}{u(k_{n+1})^2}\right).
\end{align*}
Now optimizing in $p$ gives $p=\la\bt(1+\de)$ and thus,
\begin{align*}
 &\Prob_c\Big(\sup_{k_n < m \le k_{n+1}}\Big\| \frac{\la x_m}{s(k_{n+1}) u(k_{n+1})} \Big\|> \la \beta (1+\delta)\Big)\\
 \le ~& 8\exp\Big(\frac{(1+\eps)\la^2}{u(k_{n+1})^2}-\bt(1+\de)\la\Big).
\end{align*}
Put $\la = \bt(1+\de)u(k_{n+1})^2/(2(1+\eps))$. Since $\al_n\to 0$, for any $\eps>0$, there exists $N_2>0$ such that for $n>N_2$, $0<\al(k_{n+1})\le \frac{2\sqrt{\eps}}{\bt(1+\de)}$, which ensures that we can apply Lemma \ref{expin}. This also implies $p\ge4$ for large $n$. It follows that
\[
 \Prob_c\Big(\sup_{k_n < m \le k_{n+1}} \Big\|\frac{\la x_m}{s(k_{n+1}) u(k_{n+1})}\Big\|> \la \beta (1+\delta)\Big) \le (\ln s(k_{n+1})^2)^{-\frac{\bt^2(1+\de)^2}{4(1+\eps)}}.
\]
Notice that $s(k_{n+1})^2\ge s(k_n+1)^2\ge \eta^{2n}$. Setting $\bt=2$ in the beginning of the proof, we have
\[
 \Prob_c\Big(\sup_{k_n < m \le k_{n+1}}\Big\| \frac{\la x_m}{s(k_{n+1}) u(k_{n+1})}\Big\|> \la \beta (1+\delta)\Big) \le [(2\ln \eta) n]^{-\frac{(1+\de)^2}{1+\eps}}.
\]
By choosing $\eps$ small enough so that $(1+\de)^2/(1+\eps)>1$, we find that for $n_0=\max\{N_1,N_2\}$,
\[
 \sum_{n\ge n_0} \Prob_c\Big(\sup_{k_n < m \le k_{n+1}}\Big\| \frac{\la x_m}{s(k_{n+1}) u(k_{n+1})}\Big\|> \la \beta (1+\delta)\Big) <\8.
\]
Then \eqref{redu} and Lemma \ref{nbcl} give the desired result.
\end{proof}
\section{Further questions}
Without the growing condition on martingale differences $d_n$, Stout proved Hartman--Wintner's LIL in \cite{Sto2} under the additional assumption that the martingale differences are stationary ergodic. At the time of this writing, it is still not clear to us whether a ``genuine'' version (i.e., it does not satisfy Kolmogorov's growing condition) of Hartman--Wintner's LIL is possible for noncommutative martingales. It would be interesting to see such a result in the future.
\section*{Acknowledgement}
The author would like to thank Marius Junge for inspiring discussions, Tianyi Zheng for the help on relevant literature, and Ma{\l}gorzata Konwerska for sending him the preprint \cite{Ko}. He is also grateful to the anonymous referee for careful reading and suggestions on improving the paper. 

\bibliographystyle{plain}
\bibliography{lil_ref}
\end{document}